\documentclass[preprint]{elsarticle}
\usepackage{color}
\usepackage{stix}
\usepackage[utf8]{inputenc}
\usepackage{amsmath}   
\usepackage{algorithm}
\usepackage{algorithmic}
\usepackage{pdflscape}

\usepackage[left=1in,
            right=1.2in,
            top=1in,
            bottom=1in]{geometry}
\usepackage{subcaption}
\usepackage{graphicx}  
\usepackage{listings}
\usepackage{enumerate}
\usepackage{array}     
\usepackage{amsthm,nicefrac,soul}

\usepackage{mathrsfs}

\newcommand{\e}{\varepsilon}


\numberwithin{equation}{section}
\numberwithin{figure}{section}
\numberwithin{table}{section}

\newtheorem{theorem}[equation]{Theorem}
\newtheorem{lemma}[equation]{Lemma}

\theoremstyle{definition}
\newtheorem{definition}[equation]{Definition}
\newtheorem{remark}[equation]{Remark}
\newtheorem{example}[equation]{Example}

%
\begin{document}

\begin{frontmatter}
\title{Connectedness in weighted consensus division of graphical cakes between two agents} 
\author[1]{Josef Hanke}
\ead{jph90@cam.ac.uk}

\author[2]{Ana Rita Pires\corref{cor1}}
\ead{apires@ed.ac.uk}

\affiliation[1]{organization={Yusuf Hamied Department of Chemistry, University of Cambridge},
addressline={Lensfield Road},
postcode={CB2 1EW},
city={Cambridge},
country={UK}}

\affiliation[2]{organization={School of Mathematics and Maxwell Institute for Mathematical Sciences, University of Edinburgh},
addressline={JCMB, 
Peter Guthrie Tait Road},
postcode={EH9 3FD},
city={Edinburgh},
country={UK}}

\cortext[cor1]{Corresponding author}

\date{}

\begin{abstract} 
Austin's moving knife procedure was originally introduced to find a consensus division of an interval/circular cake between two agents, each of whom believes that they receive exactly half of the cake.

We generalise this in two ways: we consider cakes modelled by graphs, and let the two agents have unequal, arbitrary entitlements. In this setting, we seek a weighted consensus division -- one where each agent believes they received exactly the share they are entitled to -- which also minimises the number of connected components that each agent receives.

First, we review the weighted consensus division of a circular cake, which gives exactly one connected piece to each agent.
Next, by judiciously mapping a circle to a graph, we produce a weighted consensus division of a star graph cake that gives at most two connected pieces to each agent -- and show that this bound on the number of connected pieces is tight. For a tree, each agent receives at most $h+1$ connected pieces, where $h$ is the minimal height of the tree.  For a connected graphical cake, each agent receives $r+2$ connected pieces, where $r$ is the radius of the graph. Finally, for a graphical cake with $s$ connected components, the division involves at most $s+2r+4$ connected pieces, where $r$ is the maximum radius among all connected components.
\end{abstract}

\begin{keyword}
fair division \sep consensus division \sep exact division \sep graphical cakes \sep weighted division
\end{keyword}


\end{frontmatter}


\section{Introduction}\label{Intro}
Fair division refers to the problem of allocating resources among individuals with different preferences, according to some notion of fairness. The simplest model is cut-and-choose: to divide a resource (metaphorically, a cake) evenly between two agents, have the first agent cut it into two and the second choose which piece they prefer. The first agent will receive a piece that she views as half (the value of) the cake, but the second agent will most likely receive a piece she values as more than half. 

Austin's moving knife procedure solves this inequity: modelling the cake as an interval (a line segment with two endpoints) the first agent places two knives, one at the left endpoint of the cake and another along the cake such that (in their view) half of the cake lies between the knives. She then moves the two knives continuously to the right, always keeping what she views as half of the cake between the knives until the second agent identifies a point where she agrees that the piece between the knives (and thus the piece(s) outside the knives) is worth half of the cake \cite{Au:SharingACake,BramsTaylor,RoWe:CCA}. 
We often identify the two endpoints of the interval cake to obtain a circular cake -- in this way, the cake is divided into two connected pieces. Each agent agrees on the value of the two pieces (in this case with a value of $\nicefrac{1}{2}$ to both); this is called an \textbf{exact division} or \textbf{consensus division}.
Note that this is stronger than the well-studied \textbf{proportional division} \cite{St:problemofd}, where each agent values their piece at least $\nicefrac{1}{2}$ (more generally, $\nicefrac{1}{n}$ in the case of $n$ agents) \cite{GoLi:Consensus}.
In general, a consensus division splits a resource into $k$ pieces, such that $n$ different agents agree on the valuation of each piece: the valuations of the $k$ pieces need not each be equal to $\nicefrac{1}{k}$, they just need to add up to 1. We focus here on the case of $n=k=2$, and call it \textbf{weighted consensus} to stress that the valuation of the two pieces need not be $\nicefrac{1}{2}$ and $\nicefrac{1}{2}$. Indeed, Austin's moving knife procedure can be modified to produce a consensus division when the two agents have different entitlements (\cite{RoWe:CCA} for rational entitlements and \cite{stack} for arbitrary entitlements). It can be further modified for other situations: for example, Brams, Taylor, and Zwicker give a procedure that produces an envy-free division (not consensus) among four agents \cite{BTZ}, and Shishido and Zeng give one that produces a proportional division of a circular cake among an arbitrary number of agents \cite{SZ:MarkChooseCut}.

In this paper, we tackle the consensus division of more general cakes between two agents with arbitrary entitlements. We consider graphical cakes, i.e., cakes that are modelled by graphs. Interval cakes and circular cakes can be viewed as simple graphical cakes, but we concern ourselves with increasingly complex graphs: first star graphs, then trees, then general graphs. For each setting, we provide a bound on the number of connected components that each agent may receive. These various aspects reflect natural constraints in possible real-world applications. Graphical cakes can model rivers, or road networks or power grids, and it makes sense to try to give each agent a piece that is as connected as possible. For example, fishing rights on a river can be granted to different parties, and some road systems are divided between car rescue companies. Here, a more connected allocation results in a better use of resources. The division of graphical cakes has been studied by various authors. For different types of graphs and numbers of agents,  Bei, Elkind, Segal-Halevi and Suksompong \cite{BS:divgraphcake2019} investigated what is the most proportional allocation possible with the fewest connected components given to each agent can be, whereas Yuen and Suksompong \cite{approx} looked instead at connected allocations with low envy. Consensus divisions may feel fairer to the agents than other usual definitions of fairness (e.g., envy-free, proportional, equitable), and have been studied in various contexts. Simmons and Su \cite{SiSu} studied consensus-halving (i.e. $k=2$ with a 50-50 split, arbitrary $n$) of interval cakes with pieces as connected as possible, Goldberg and Li looked at the weighted version of consensus-halving \cite{GoLi:Consensus}, and Alon and West \cite{Alon,AlonWest} investigated the related problem of necklace splitting (where each agent assigns value only to the beads of a different colour on a section of a necklace).

In Section \ref{sec:austin}, we define the notion of tracing of a graphical cake, which allows us to translate valuations and division between circular cakes and more general graphical cakes. We also make a note on how to implement Austin's moving knife procedure when we allow agents to assign a value of zero to open subsets of the cake --  we did not find this case explicitly addressed elsewhere in the literature. In Section \ref{Circle}, we review the algorithm that produces a weighted consensus division of a circular cake between two agents, which results in each agent receiving one connected piece. A similar results appears in \cite{stack}. The existence of such an algorithm is a direct result of Stromquist-Woodall theorem  \cite[Theorem 1]{SW:StormquistWoodall} for two agents. A repeated application of the Stromquist-Woodall theorem guarantees the existence of a weighted consensus division of a circular cake among $n$ agents that requires at most $(n-1)(2n-2)$ cuts. 

The current paper presents algorithms that produce weighted consensus divisions between two agents, providing a bound on the number of connected components each agent receives, as follows:

\begin{itemize}
\item For a star cake, each agent receives at most two connected pieces (Section \ref{graphdiv}). This bound is sharp.
\item For a tree cake, each agent receives at most $h+1$ connected pieces, where $h$ is the minimal height of the tree (Section \ref{graph}). In this section, we introduce an algorithm for tracing trees (essentially depth-first search) which is crucial in obtaining the bounds in Sections \ref{graph}, \ref{graph2}, and \ref{graph3}.
\item For a connected graphical cake, each agent receives at most $r+2$ connected pieces, where $r$ is the radius of the graph (Section \ref{graph2}).
\item For a general graphical cake with $s$ connected components, the division involves at most $s+2r+4$ connected pieces, where $r$ is the maximum radius among all connected components (Section \ref{graph3}).
\end{itemize}

\noindent The study of weighted consensus division of non-circular cakes between $n\geq 3$ agents remains an open question.

\section{Setup}\label{sec:austin}

Following the definition in \cite[Section 2]{BS:divgraphcake2019}, a \textbf{graphical cake} represented by a finite graph $\Gamma=(V,E)$ consists of a set of $|E|$ intervals, each interval corresponding to an edge of $\Gamma$, with some endpoints of those intervals identified according to the structure of $\Gamma$. By abuse of notation, we often denote the cake also by $\Gamma$.

In this framework, an interval cake is a graphical cake represented by a graph consisting of a single edge (with the two endpoints not identified with each other), a circular cake is a graphical cake represented by a graph consisting of a single edge whose two endpoints are identified with each other, and a multi-cake (i.e., a cake consisting of a finite number $m$ of disjoint intervals) is a graphical cake represented by a graph consisting of $m$ edges, with no identification of any endpoints. A \textbf{piece} or subset of the cake is a finite union of subintervals from one or more edges in $E$. We say that a piece of cake is connected if it is path-connected under the identifications of the endpoints of edges mentioned above.

\begin{definition}
A \textbf{tracing of a graphical cake} $\Gamma=(V,E)$ is a surjective function $g:[0,1]\to\Gamma$ such that $g^{-1}(V)$ is a finite set of points and the restriction  $$g|_{[0,1]\setminus g^{-1}(V)}:[0,1]\setminus g^{-1}(V)\to \Gamma\setminus V$$
is bijective.
\end{definition}

A \textbf{valuation} $v$ on a continuous\footnote{Continuous as opposed to discrete.} set $X$ (such as a graphical cake) is a non-atomic probability measure on $X$. In particular, this means that each point in $X$ has zero value, and so for any subset $Y\subset X$ and any finite subset $S\subset Y \subset X$ we have 
\begin{equation}\label{eq:points}
v(Y\setminus S )= v(Y).
\end{equation}

\begin{remark}\label{nonatomic}
A valuation on a graphical cake $\Gamma$ induces, via a tracing $g$ of $\Gamma$, a valuation on $[0,1]\setminus g^{-1}(V)$. Therefore, by the above, it induces also a valuation on the circular cake $[0,1]/(0\sim 1)$.  
\end{remark}

We denote the valuation functions of the two agents $A$ and $B$ on a graphical cake $\Gamma$ by $v_A$ and $v_B$, respectively. By \eqref{eq:points}, an isolated point (such as a vertex) can be assigned to both agents, and so it makes sense to define a \textbf{division of the cake} $\Gamma$ between the two agents as a pair of pieces of cake $(\alpha,\beta)$ such that $\alpha \cup\beta=\Gamma$ and $\alpha \cap\beta$ consists of finitely many points.

\begin{definition}
\label{weightedproportional}
Let $0\leq t\leq 1$ and $(t,1-t)$ be the vector of entitlements of agents $A$ and $B$. 
A division $(\alpha, \beta)$ is a \textbf{weighted consensus division} if both agents believe that they (and therefore the other agent) have been allocated exactly their entitlement: $$v_A(\alpha)=v_B(\alpha)=t\ \text{ and }\ v_A(\beta)=v_B(\beta)=1-t.$$
\end{definition}

\begin{remark}\label{inducedivision}
Given a tracing $g:[0,1]\to\Gamma$ of a graphical cake $\Gamma$, a division of the circular cake $[0,1]/(0\sim 1)$ can be mapped via the bijection $g|_{[0,1]\setminus g^{-1}(V)}$ to a division of $\Gamma\setminus V$. For valuation purposes, we can assign the vertices in $V$ to whichever agents we want, to get a division of the entire graphical cake $\Gamma$. But for connectivity purposes, our convention is that a vertex should be assigned precisely to the agents that own the terminal segment of an edge incident to that vertex.
By construction, if the division of the circular cake is weighted consensus, then so is the division induced on $\Gamma$.
\end{remark}

\subsection{A remark on Austin's moving knife procedure}\label{rmkmoving}

Note that a valuation function can assign a value of zero to open subsets of the cake. This requires a more subtle use of the Intermediate Value Theorem in the argument of Austin's moving knife procedure than otherwise, and we detail that here because we did not find this case explicitly addressed elsewhere in the literature.

Consider an interval cake $[0,1]$ and an entitlement vector $(\nicefrac{1}{2},\nicefrac{1}{2})$; we follow Austin's moving knife procedure described in \cite[p.214-215]{Au:SharingACake} and summarised above in Section \ref{Intro}. We let $c$ denote the position of the first knife and $\kappa(c)$ the position of the second knife, positioned by agent $A$ such that $v_A\bigl([c,\kappa(c)]\bigr) = \nicefrac{1}{2}.$

Implicit in the definition of the $v_P$ valuation function as a probability measure are the probability density and cumulative distribution functions $f_P$ and $F_P$, respectively. If we do not allow a value of zero to be assigned to open subsets of the cake, $F_P$ is strictly monotone increasing and thus has a well-defined continuous inverse. In this case, the position of the second knife can indeed be viewed as a continuous function of the position of the first:
\begin{equation}\label{eq:kappa}
\kappa(c) = F_A^{-1}\bigl(\frac12+F_A(c)\bigr).
\end{equation}
The Intermediate Value Theorem is then applied to the following continuous function:
\begin{equation}\label{eq:IMV}
c\mapsto v_B\bigl([c, \kappa(c)]\bigr) = F_B\bigl(\kappa(c)\bigr) - F_B\bigl(c\bigr).
\end{equation}

\begin{figure}[h!]
\begin{center}
\includegraphics[width = 0.8\textwidth]{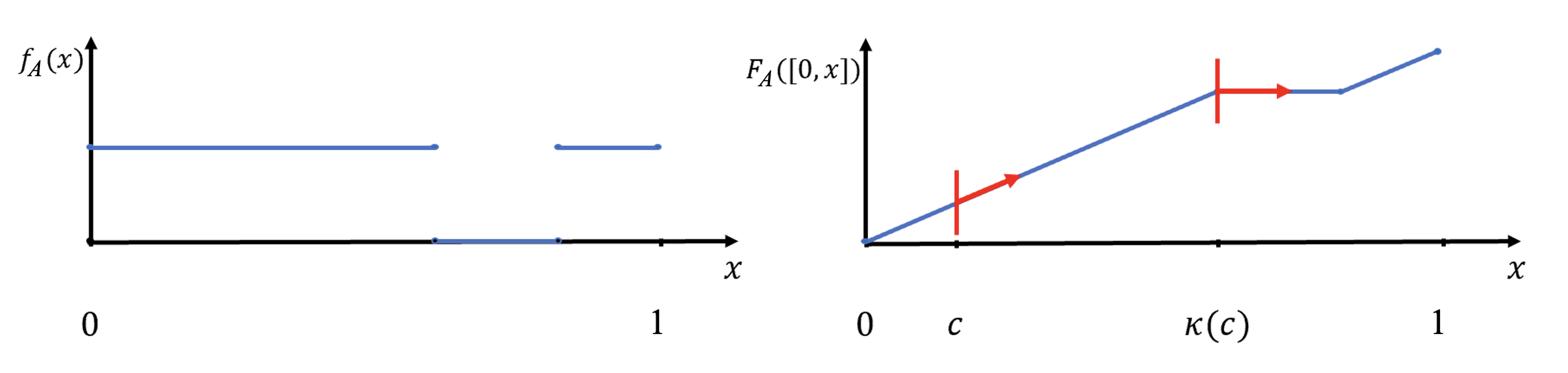}
\caption{If the probability density function of agent $A$ has a zero-valued interval as in the example on the graph on the left-hand side, then the cumulative distribution function will be constant on the same interval as on the graph on the right-hand side. In that case, the function $\kappa$ in \eqref{eq:kappa} is not well defined. Indeed, at the point in the procedure where the knives are at the locations indicated in red on the graph on the right, keeping $c$ constant and increasing slightly the value denoted in the figure by $\kappa(c)$ would still yield $v_A\bigl([c,\kappa(c)]\bigr)=\nicefrac12$.}
\label{zero_issue}
\end{center}
\end{figure}

Figure \ref{zero_issue} illustrates the problem with allowing $f_P$ to vanish on an interval. In practical terms, in many situations it may be perfectly fine to edit the agents' probability density functions slightly, assigning a small positive value to any vanishing intervals and rescaling the rest of the function accordingly. Alternatively, it is possible to modify the moving knife procedure in order to ensure that a continuous function $\kappa(c)$ can be defined and the Intermediate Value Theorem can be applied to the function \eqref{eq:IMV}. The modification is as follows: 

Move the knives as in Austin's moving knife procedure until the rightmost knife reaches a point $\kappa(c)$ where 
$f_A\bigl(\kappa(c)\bigr) =0.$
Stop moving the left knife, and continue moving the right knife until $f_A\bigl(\kappa(c)\bigr) >0$. Start moving both knives again and repeat this procedure until the whole cake is covered.

\section{Circular cakes}
\label{Circle}

The Stromquist-Woodall theorem \cite[Theorem 1]{SW:StormquistWoodall} in the case of $n=2$ agents guarantees the existence of a subset of a circular cake that both agents value as $t$, for any given $0\leq t\leq 1$, requiring only $2n-2=2$ cuts. This implies that the agents also agree on the value of the remainder of the cake, and therefore guarantees the existence of a weighted consensus division with entitlements $t$ and $1-t$, in which both agents receive a connected piece. The existence of such a division suffices for the proofs of the results in Sections \ref{graphdiv}--\ref{graph2}, but for completeness we present here a simpler proof for our particular case of two agents. A similar proof appears on \cite{stack} as noted by an anonymous referee.

\begin{theorem}  
\label{thm2}
Let two agents be owed arbitrary entitlements of a circular cake. There exists a weighted consensus division of the cake such that each agent's piece is connected.
\end{theorem}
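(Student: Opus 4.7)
The approach is to adapt Austin's moving knife procedure from the $t=\nicefrac12$ case to arbitrary $t$. Agent $A$ will position and rotate two knives around the circle so that the clockwise arc between them always has $v_A$-value $t$, and the intermediate value theorem will then identify a moment at which $B$ also values that arc at $t$.

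First, I would make the position of the second knife a well-defined continuous function of the first. Writing $F_A$ for the cumulative distribution of $v_A$, and parametrising the circular cake as $[0,1]/(0\sim 1)$ via a tracing, I define $\kappa(c)$ by $v_A\bigl([c,\kappa(c)]\bigr)=t$, where $[c,\kappa(c)]$ denotes the clockwise arc from $c$ to $\kappa(c)$. When $F_A$ is strictly increasing, $\kappa(c)=F_A^{-1}\bigl(t+F_A(c)\bigr)\pmod 1$ is continuous in $c$, in direct analogy with \eqref{eq:kappa}; when $F_A$ is constant on some subinterval, the modification of the moving-knife procedure from Section~\ref{rmkmoving} restores continuity. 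An even cleaner route is to change coordinates via $F_A$ so that $v_A$ becomes the uniform measure on the circle, after which one simply takes $\kappa(c)=c+t\pmod 1$, manifestly continuous and satisfying $v_A\bigl([c,\kappa(c)]\bigr)=t$ by construction.

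Next I would consider the continuous function $\phi(c)=v_B\bigl([c,\kappa(c)]\bigr)$, namely agent $B$'s valuation of the arc that $A$ currently views as worth $t$. Working in the uniform $v_A$-coordinate, a Fubini argument gives
$$\int_0^1 \phi(c)\,dc=\int_0^1\!\int_0^1 \mathbf{1}_{[c,c+t]}(y)\,dv_B(y)\,dc=\int_0^1 t\,dv_B(y)=t,$$
because each point $y$ on the circle lies in $[c,c+t]\pmod 1$ for a set of $c$'s of Lebesgue measure $t$. Hence $\phi$ cannot strictly exceed $t$ everywhere, nor strictly lie below $t$ everywhere, and by the intermediate value theorem there exists $c_0$ with $\phi(c_0)=t$.

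At this configuration both agents value $[c_0,\kappa(c_0)]$ at exactly $t$, so (as $v_A$ and $v_B$ are probability measures) both value the complementary arc at $1-t$. This is the desired weighted consensus division, and each agent's share is a single arc, hence connected. The only real obstacle is the continuity of $\kappa$ when $v_A$ is allowed to vanish on open subsets, but this is precisely the issue arranged away by the change of coordinates or by the modified procedure in Section~\ref{rmkmoving}; once that is in hand, the averaging-plus-IVT step is essentially a one-liner.
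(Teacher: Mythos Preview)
Your proof is correct and takes a genuinely different route from the paper. Both arguments set up the same moving-knife configuration and the function $\phi(c)=v_B\bigl([c,\kappa(c)]\bigr)$, but the mechanisms for showing $\phi$ attains the value $t$ differ. The paper argues by contradiction: assuming $\phi>t$ everywhere, compactness of the circle gives $\phi\geq t+\varepsilon$; one then lifts the densities to periodic functions on $\mathbb{R}$, iterates $\kappa$ to produce a sequence $(c_i)$, and observes that the bounded periodic function $F_A-F_B$ would have to decrease by at least $\varepsilon$ along this sequence, a contradiction. Your averaging argument is more direct: once $v_A$ is normalised to be uniform, Fubini gives $\int_0^1\phi(c)\,dc=t$ in one line, so $\phi$ cannot lie strictly on one side of $t$, and the intermediate value theorem finishes. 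This bypasses both the lift to $\mathbb{R}$ and the iterative construction, and makes the role of the entitlement $t$ completely transparent.

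One small caveat: your ``cleaner route'' of reparametrising by $F_A$ is only a genuine change of coordinates when $F_A$ is strictly increasing. If $v_A$ vanishes on an open arc carrying positive $v_B$-mass, then $F_A$ collapses that arc to a point and the pushed-forward $v_B$ acquires an atom there; in the new coordinate $\phi$ can then jump over $t$, and the IVT step fails as written (e.g.\ take $v_A$ uniform on one half of the circle and $v_B$ uniform on the other, with $t=\nicefrac12$). This is not a serious gap---you also invoke the modification of Section~\ref{rmkmoving}, under which $\phi$ is continuous in the original coordinate and the averaging conclusion (that $\phi$ is $\leq t$ somewhere and $\geq t$ somewhere) still combines with IVT---and the paper's own proof is equally informal about this degenerate case. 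But it is worth noting that the reparametrisation does not by itself sidestep the zero-density issue.
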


\begin{proof}
Let the entitlement vector be $(t,1-t)$ and denote the circular cake by $\Theta$. When writing an interval $[a, b]\in\Theta$, we mean as usual the interval $[a,b]\subset[0,1]$ if $a\leq b$ and $[a,1]\cup[0,b]$ if $a>b$. This implies $$v_P\left([a,b]\right) = \left\{
	\begin{array}{ll}
		\int_a^b f_P(\theta)\ d\theta  & \mbox{if } a \leq b \\
		1-\int_b^a f_P(\theta)\ d\theta & \mbox{if } a > b
	\end{array}
\right. $$
for each agent $P\in\{A,B\}$.

Agent $A$ places two knives on the cake; the first at $c=0$ and the second at $\kappa(0)$ such that she values the piece of cake between the knives as her entitlement $t$. Then the first knife is moved around the cake anticlockwise, and agent $A$ moves the second knife such that she consistently values the cake between the two knives as equal to $t$. This defines the position of the second knife as a function of the position of the first knife: $\kappa: \Theta \rightarrow \Theta$ such that $v_A\bigl([c,\kappa(c)]\bigr)=t$. As noted in Section \ref{rmkmoving}, if $f_A$ is strictly positive, then $\kappa$ is well-defined; otherwise, we can modify how to move the second knife; this is explained in the last paragraph of Section \ref{rmkmoving}.

We claim that at some point, agent $B$ also values the cake between the knives as $t$. At this point, the division $\bigl([c,\kappa(c)],[\kappa(c),c]\bigr)$ is weighted consensus. For the purpose of a proof by contradiction, assume no such point exists. By the continuity of $c\mapsto v_B\bigl([c,\kappa(c)]\bigr)$, we can assume that for all $c\in\Theta$ we have  
$v_B\bigl([c,\kappa(c)]\bigr)>t$. By compactness of $\Theta$, this means that there exists an $\e>0$ such that $v_B\bigl([c,\kappa(c)]\bigr)\geq t+\e.$

We lift the probability density functions $f_P:\Theta\to\mathbb{R}$ to $\widetilde{f_P}:\mathbb{R} \rightarrow \mathbb{R}$ by defining $\widetilde{f_P}(x+1) = \widetilde{f_P}(x)$ and obtain corresponding cumulative distribution functions $\widetilde{F_P}:\mathbb{R}\to \mathbb{R}$ which satisfy $$\widetilde{F_P}(x+1) = \int_0^{x+1}\widetilde{f_P}(\theta)\ d\theta = \int_0^x\widetilde{f_P}(\theta)\ d\theta + \int_x^{x+1}\widetilde{f_P}(\theta)\ d\theta = \widetilde{F_P}(x)+1.$$ 
As a consequence, the difference $\bigl(\widetilde{F_A} - \widetilde{F_B}\bigr): \mathbb{R} \rightarrow \mathbb{R}$ is periodic with period 1
 
and thus descends to a function $\bigl(F_A - F_B\bigr):\Theta \rightarrow \mathbb{R}.$ By compactness of $\Theta$ there exists an upper bound $M$ such that $|\bigl(F_A-F_B\bigr)| \leq M$ on $\Theta$. 

The function $\kappa:\Theta\to \Theta$ also induces a function $\widetilde{\kappa}:\mathbb{R}\to\mathbb{R}$ in the obvious way. We use it to inductively define a sequence $(\widetilde{c_i})_{i\in\mathbb{N}}\subset \mathbb{R}$ via $\widetilde{c_0} = 0$ and $ \widetilde{c_{i+1}} = \widetilde{\kappa}(\widetilde{c_i})$, which then induces a corresponding sequence $(c_i)_{i\in\mathbb{N}}\subset \Theta$.

Note that by construction $\widetilde{F_A}(\widetilde{c_{i+1}}) - \widetilde{F_A}(\widetilde{c_i}) = t$ and $\widetilde{F_B}(\widetilde{c_{i+1}}) -\widetilde{F_B}(\widetilde{c_i}) = v_B\bigl([c_i,c_{i+1}]\bigr)\geq t+\e$. Now
$$\bigl(F_A - F_B\bigr)(c_{i+1}) = \widetilde{F_A}(\widetilde{c_{i+1}}) - \widetilde{F_B}(\widetilde{c_{i+1}}) \leq \widetilde{F_A}(\widetilde{c_{i}}) - \widetilde{F_B}(\widetilde{c_{i}}) - \e = \bigl(F_A -F_B\bigr)(c_{i}) - \e$$
which implies
$\bigl(F_A - F_B\bigr)(c_i)  \leq \bigl(F_A - F_B\bigr)(0) -i\e$, contradicting the bound $|\bigl(F_A - F_B\bigr)|\leq M$.

\end{proof}

\section{Star cakes}\label{graphdiv}

We now turn to the case of a star-shaped graphical cake. An ($l$-)\textbf{star graph} is a tree on $l+1$ vertices with one vertex having degree $l$ and the other $l$ vertices having degree one. We assume that $l$ is at least three since, for cake-cutting purposes, $1$-stars and $2$-stars can be modelled as interval cakes, in which case one agent receives one connected piece and the other gets at most two (by applying Theorem \ref{thm2} without identifying the endpoints of $[0,1]$).

\begin{theorem}\label{bigstar}
Let two agents be owed arbitrary entitlements of a star graph cake. There exists a weighted consensus division of the cake such that each agent receives at most two connected pieces.
\end{theorem}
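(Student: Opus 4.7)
The plan is to reduce the star-cake problem to the circular-cake case (Theorem~\ref{thm2}) via a carefully chosen tracing, then lift the resulting weighted consensus division back to $\Gamma$ using Remark~\ref{inducedivision}. The tracing is engineered so that every discontinuity of $g$ is ``anchored'' at the central vertex $v_0$, which forces each arc from the circle division to contribute at most two connected pieces on $\Gamma$.

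Label the edges $e_1,\dots,e_l$ so that each $e_j$ joins the center $v_0$ to the leaf $v_j$, and partition $[0,1]$ into subintervals $I_j=[(j-1)/l,\,j/l]$. Define $g:[0,1]\to\Gamma$ so that on $\operatorname{int}(I_j)$ the map $g$ is a bijection onto $\operatorname{int}(e_j)$ with $\lim_{t\to((j-1)/l)^+}g(t)=v_0$ and $\lim_{t\to(j/l)^-}g(t)=v_j$, and set $g(0)=v_0$ and $g(j/l)=v_j$ for $j=1,\dots,l$. Then $g^{-1}(V)=\{0,1/l,\dots,1\}$ is finite, every vertex lies in the image, and $g|_{[0,1]\setminus g^{-1}(V)}$ is bijective onto $\Gamma\setminus V$, so $g$ is a valid tracing. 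Pulling the valuations back through $g$ (Remark~\ref{nonatomic}) and applying Theorem~\ref{thm2} yields a weighted consensus division of the circular cake $[0,1]/(0\sim 1)$ into two connected arcs, which lifts to a weighted consensus division of $\Gamma$ via Remark~\ref{inducedivision}.

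The crucial step is to verify that each arc pulls back to at most two connected pieces on the star. Take any arc with endpoints $p\in\operatorname{int}(I_{j_0})$ and $q\in\operatorname{int}(I_{j_f})$, so it traverses the intervals $I_{j_0},I_{j_0+1},\dots,I_{j_f}$ (with indices taken cyclically if the arc contains the wrap-around point $0\sim 1$). The image decomposes as: a \emph{head} sub-segment of $e_{j_0}$ running from $g(p)$ to the leaf $v_{j_0}$; the sets $e_{j_0+1}\setminus\{v_0\},\dots,e_{j_f-1}\setminus\{v_0\}$, each of which has $v_0$ as a limit point; and a \emph{tail} sub-segment of $e_{j_f}$ running from $v_0$ (as a limit) to $g(q)$. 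By the vertex-assignment convention of Remark~\ref{inducedivision}, every sub-piece whose closure contains $v_0$ belongs to the same connected component through the central vertex; in the wrap-around case, the actual vertex $v_0=g(0)$ is in the image and joins this ``central bouquet'' explicitly. The head sub-segment terminates only at the leaf $v_{j_0}$ and so remains an isolated second component. The same analysis applies to the complementary arc, giving each agent at most two connected pieces.

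The main obstacle is the bookkeeping at the endpoints of the arc and in the wrap-around case: one must confirm that only the head end of the arc (in $I_{j_0}$) can fail to reach $v_0$, while the tail end (in $I_{j_f}$) always terminates at $v_0$ by construction. This asymmetry, built into the chosen center-to-leaf orientation of $g$ on each $I_j$, is exactly what yields the two-component bound.
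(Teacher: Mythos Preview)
Your approach is essentially identical to the paper's: the same center-to-leaf tracing of the star, the same reduction to Theorem~\ref{thm2} via Remarks~\ref{nonatomic} and~\ref{inducedivision}, and the same head/tail analysis (which the paper presents as Case~(i) cuts on the same edge versus Case~(ii) cuts on different edges). One small omission: your arc decomposition tacitly assumes the arc crosses at least one interval boundary, so the degenerate case where both cut points lie in the same $I_{j_0}$ and the arc is the short segment $[p,q]\subset I_{j_0}$---which maps to a single sub-segment of $e_{j_0}$ and hence trivially gives one connected piece---is not covered by your ``head reaches $v_{j_0}$, tail reaches $v_0$'' description and should be stated separately, as the paper does in its Case~(i).
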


\begin{proof}
Denote the root of the star graph $\Gamma$ by $R$, the edges by $e_1, e_2, \ldots, e_{l}$, and the corresponding degree-one vertices by $w_1, w_2,\ldots, w_{l}$. Let $g$ be a tracing of $\Gamma$ that starts at $R$, follows the edge $e_1$ to its endpoint $w_1$, then jumps to $R$ again and repeats for each edge $e_2, e_3, \ldots$ in turn, until it terminates at $w_{l}$.

As noted in Remark \ref{nonatomic}, the agents' valuation functions on $\Gamma$ induce valuation functions on a circular cake. Theorem \ref{thm2} gives a weighted consensus division of a circular cake where each agent's piece is connected. By Remark \ref{inducedivision} this then induces a weighted consensus division of $\Gamma$.

\begin{figure}[h!]
\begin{center}
\includegraphics[width=0.8\textwidth]{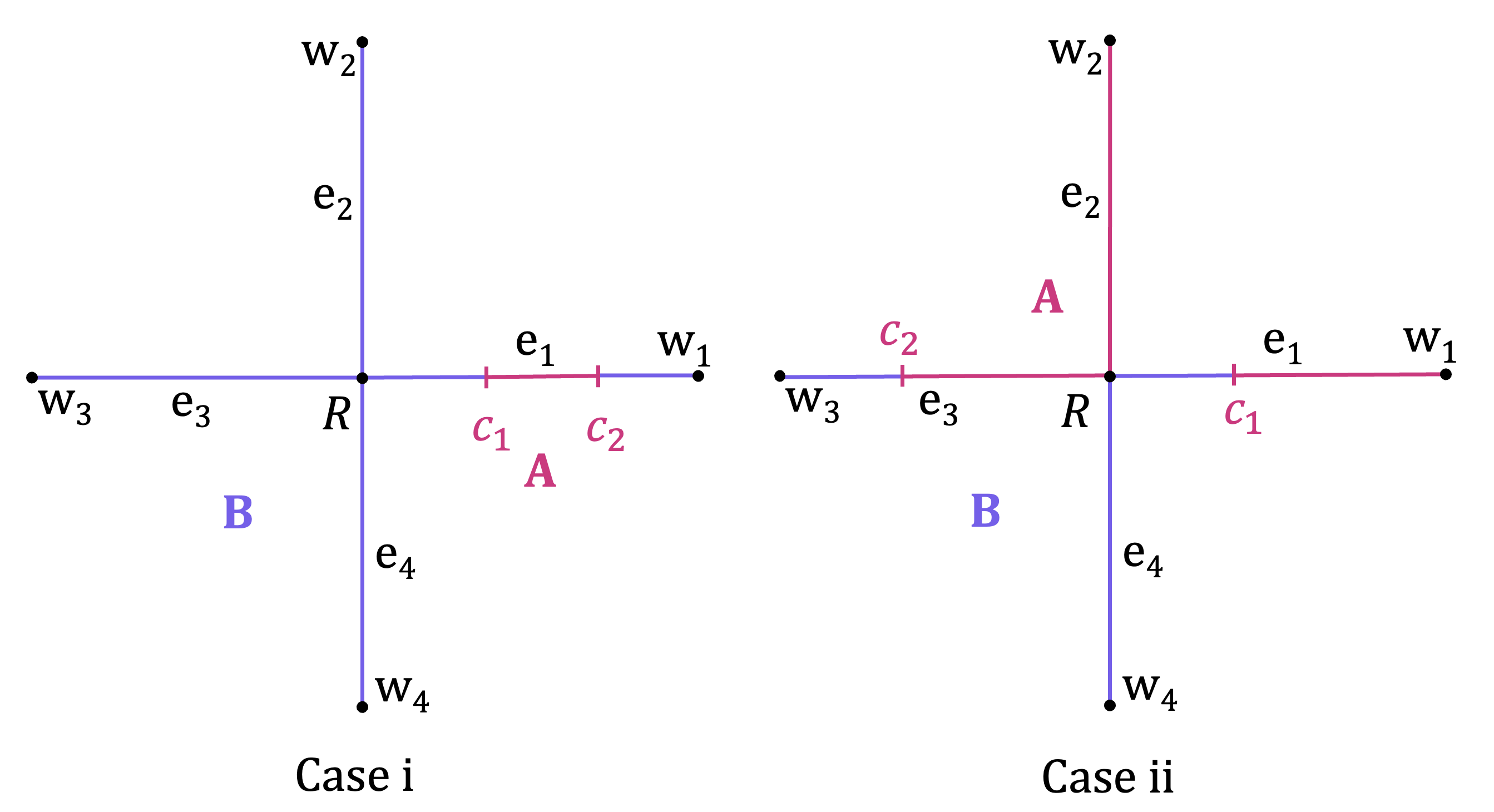}
\caption{Case (i) with $c_1$ and $c_2$ on the same branch. Case (ii) with $c_1$ and $c_2$ on different branches. The pink piece is assigned to agent $A$ and the purple piece to agent $B$.}
\label{circle2}
\end{center}
\end{figure}

It remains to show that this division of $\Gamma$ gives each agent at most two connected pieces. Let $c$ and $\kappa(c)$ denote the two cuts made on the circular cake and let $c_1=g(c)$ and $c_2=g(\kappa(c))$.

There are two distinct cases to consider: either (i) the two cuts, $c_1$ and $c_2$, are on the same edge $e_i$, or (ii) they are on different edges, $e_i$ and $e_j$; see Figure \ref{circle2}.

\noindent\underline{Case (i):}
In this case, one agent receives the single connected piece $[c_1, c_2]\subset e_i$, while the other receives two pieces: the outer remainder of the edge $e_i$, which is $[c_2, w_i]$, plus the rest of the graph (including $[R, c_1]$, the inner remainder of the edge $e_i$) that is connected through $R$.

\noindent\underline{Case (ii):} Without loss of generality, let $i<j$.  Agent $A$ receives the piece $[c_1,w_i]$ as well as the following piece, connected through $R$:
$$[R,c_2]\cup \bigcup_{k=i+1}^{j-1}e_k.$$ Agent $B$ receives the connected piece $[c_2,w_j]$ as well as the following piece, connected through $R$:
$$[R,c_1]\cup \bigcup_{k=j+1}^{l}e_k\bigcup_{k=1}^{i-1}e_k.$$ 
\end{proof}

To conclude this section, we show with Example \ref{example} that Theorem \ref{bigstar} is sharp in the sense that it is not generally possible to guarantee a consensus division such that one of the agents gets at most one piece while the other one gets at most two pieces. Whether this bound is sharp for a weighted proportional division of a star graph cake (recall that a weighted consensus division is also a weighted proportional division) remains an open question.

\begin{figure}[h!]
\begin{centering}
\includegraphics[width=0.4\textwidth]{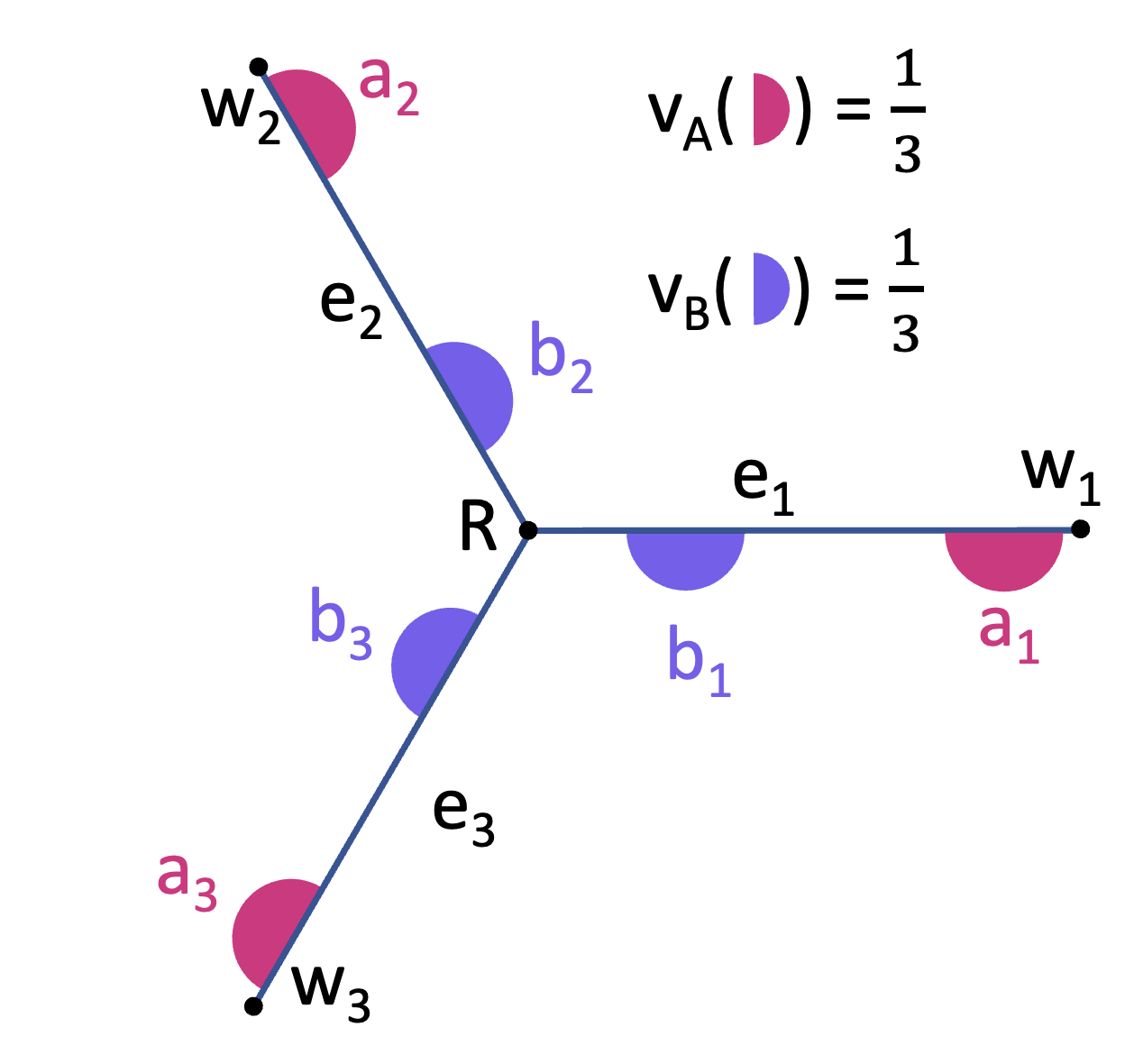}
\caption{An example of a star graph cake where there is no consensus division where one agent receives a single connected piece while the other receives at most two.}
\label{counterexamplefig}
\end{centering}
\end{figure}

\begin{example}\label{example}
Let $l=3$ and $t=\nicefrac12$, and consider the value functions represented in Figure \ref{counterexamplefig}. The corresponding distribution functions must vanish at various points, but in practice, we can modify the functions slightly to avoid the type of complications described in Section \ref{rmkmoving}. 
Assume, towards a contradiction, that there exists a consensus division $(\alpha,\beta)$ of $\Gamma$ such that one of the agents receives one connected piece and the other receives two. 
Agent $A$ must receive a piece touching at least two of $\{a_1,a_2,a_3\}$, otherwise the value of her piece is at most $\nicefrac{1}{3}$. Agent $B$ must also receive a piece touching at least two of $\{a_1,a_2,a_3\}$, otherwise agent $A$'s value is at least $\nicefrac{2}{3}$.
Thus, as we assumed one of the agents $A$ or $B$ receives a connected piece, one of them must receive at least two of $\{b_1,b_2,b_3\} $ entirely. If it is agent $B$, then agent $B$ values her piece at least $\nicefrac{2}{3}$; if it is agent $A$, then agent $B$'s valuation is at most $\nicefrac{1}{3}$. In both cases, the allocation is not a consensus allocation.

Note that we can find a proportional division of the same star graph cake by letting $\alpha = a_1 \cup a_2$ (two connected pieces) and $\beta = \Gamma \setminus \alpha$ (one connected piece). In this case, $v_A(\alpha)= \nicefrac{2}{3} \geq \nicefrac{1}{2}$ and $v_B(\beta) = 1 \geq \nicefrac{1}{2}$.
\end{example}
 
\section{Tree cakes}\label{graph}

Next, we look at the case when the graphical cake is a tree -- a connected graph with no cycles. We extend the tracing of star graphs described in the proof of Theorem \ref{bigstar} to a tracing of general tree graphs, described in Algorithm \ref{algorithm}.

Let $\Gamma$ be a tree graph and choose its root $R$ such that $\Gamma$ has minimal height, $h$. Label the subtrees below $R$ as $\gamma_1,...\gamma_{l}$, each with a \textbf{rooted edge} $E_k$ connecting $\gamma_k$ to $R$; see Figure \ref{example_2proof}. Note that the height of each of these subtrees is at most $h-1$.  A \textbf{leaf edge} is an edge incident on a leaf vertex (i.e.\ a vertex of degree one). Our convention is that the subtree $\gamma_k$ corresponding to a rooted leaf edge $E_k$ is empty.  

The tree tracing $\texttt{TT}(\Gamma,R)$ essentially implements a depth-first search and is defined inductively using the following algorithm:

\begin{algorithm}
\caption{\texttt{TT}(G, x)}\label{algorithm}
\begin{algorithmic}[1]
\WHILE{there exists an untraced edge incident to $x$}
    \STATE Let $e$ be the leftmost of these edges.
    \STATE Let $W$ be the other endpoint of $e$.
    \STATE Let $G'$ be the subtree rooted at $W$.
    \STATE Trace $e$ from $x$ to $W$.
    \STATE \texttt{TT}($G'$, $W$).

\ENDWHILE
\end{algorithmic}
\label{ttt}
\end{algorithm}

\vspace{1em}

Note that $\texttt{TT}(\Gamma,R)$  covers each subtree $\gamma_i$ independently and completely, directly after tracing along $E_i$. This allows for results based on inductive reasoning, using Algorithm \ref{ttt}, to be found.

\begin{lemma}
\label{one_cut}
Pausing $\texttt{TT}(\Gamma, R)$ at any point divides a tree $\Gamma$ with root $R$ and height $h\geq 1$ into two sections: the section already traced has one connected piece containing $R$, and the section still untraced has at most $h+1$ connected pieces. 
\end{lemma}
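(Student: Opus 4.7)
The plan is to prove Lemma~\ref{one_cut} by induction on the height $h\geq 1$ of $\Gamma$.

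For the base case $h=1$, the tree is an $l$-star and $\texttt{TT}(\Gamma,R)$ reduces to the explicit procedure used in the proof of Theorem~\ref{bigstar}: trace $e_1$, jump back to $R$, trace $e_2$, and so on. At any pause time some edges $e_1,\ldots,e_{k-1}$ have been fully traced and a prefix $[R,p]\subset e_k$ has been traced. The traced portion $e_1\cup\cdots\cup e_{k-1}\cup[R,p]$ is clearly one connected piece containing $R$, and the untraced portion is $[p,w_k]\cup e_{k+1}\cup\cdots\cup e_l$. Under the vertex-assignment convention of Remark~\ref{inducedivision}, $R$ belongs to the untraced portion as long as some edge incident to $R$ is not yet fully traced, i.e.\ whenever $k<l$. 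Hence the untraced portion splits into at most two pieces: the tail $[p,w_k]$, and the set $\{R\}\cup e_{k+1}\cup\cdots\cup e_l$, matching $h+1=2$.

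For the inductive step, suppose the lemma for heights up to $h-1$. Label the subtrees below $R$ as $\gamma_1,\ldots,\gamma_l$, with roots $W_k$ and rooted edges $E_k$, each $\gamma_k$ of height at most $h-1$. The DFS structure of Algorithm~\ref{algorithm} forces a unique index $k$ such that $\gamma_1,\ldots,\gamma_{k-1}$ have been fully traced and the algorithm is in one of three states: (a) at $R$ about to trace $E_k$; (b) partway along $E_k$; or (c) inside $\gamma_k$ after fully tracing $E_k$. In cases (a) and (b) the analysis mirrors the base case, yielding at most $2\leq h+1$ untraced components. In case (c), I would apply the inductive hypothesis to $\texttt{TT}(\gamma_k,W_k)$: its traced portion is one connected piece containing $W_k$, and its untraced portion has at most $h$ components. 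Attaching via $E_k$, the traced portion of $\Gamma$ becomes a single connected piece containing $R$. The untraced portion of $\Gamma$ is then the disjoint union of the at most $h$ untraced components inside $\gamma_k$ and the piece $E_{k+1}\cup\gamma_{k+1}\cup\cdots\cup E_l\cup\gamma_l$, which is one further connected piece through $R$ if $k<l$ (since $R$ is still incident to an untraced edge) or empty if $k=l$. This gives at most $h+1$ untraced components, completing the induction.

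The main delicate point is the status of internal vertices visited multiple times by the tracing, in particular $R$. Under the convention of Remark~\ref{inducedivision}, $R$ lies in the untraced portion exactly when some edge at $R$ is not yet fully traced, which is precisely what glues the "waiting" subtrees into a single component through $R$. Making this bookkeeping precise, and checking that the counts go through cleanly whether the pause falls at a vertex, mid-edge, or at the transition between subtrees, is where the bulk of the careful argument lies; but each subcase is immediate once the convention is applied consistently.
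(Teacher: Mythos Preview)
Your proof is correct and follows essentially the same approach as the paper's: induction on the height $h$, with the base case being the star graph and the inductive step split according to whether the pause occurs on a rooted edge $E_k$ or inside a subtree $\gamma_k$, applying the inductive hypothesis to $\gamma_k$ in the latter case. The only cosmetic differences are that you separate out the sub-case of being at $R$ (your case (a)) and are more explicit about the vertex-ownership convention for $R$.
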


\begin{proof}
    We proceed by complete induction on the height of the tree $\Gamma$ with root $R$. 

    \textbf{Base case:} For trees of height $1$, i.e.\ star graphs, each edge is traced from the root $R$ to its leaf vertex. Hence, the traced section is connected through $R$. The other section has at most two connected components: at most one containing $R$, and at most one not containing $R$.

    \textbf{Induction hypothesis:} Assume that for any tree of height $n<h$, pausing at a point along its tree tracing \texttt{TT} divides it into two sections: the section already traced, which has one connected piece containing $R$, and the section still untraced has at most $n+1$ connected pieces. 
    
    \textbf{Inductive step:} Let $\Gamma$ be a rooted tree of height $h\geq 2$. 
    We consider two separate cases that depend on the location of the pause along the tree tracing \texttt{TT}$(\Gamma,R)$: (i) on a rooted edge $E_i$, and (ii) on a subtree $\gamma_i$ of $\Gamma$.
    This corresponds to pausing the $\texttt{TT}$ algorithm during (i) step 5 of $\texttt{TT}(\Gamma, R)$ or (ii) step 6, i.e. during $\texttt{TT}(\gamma_i,\text{root of } \gamma_i)$. See Figure \ref{example_2proof} for an illustration of these cases (ii).

\begin{figure}[h!]
\begin{centering}
\includegraphics[width=0.8\textwidth]{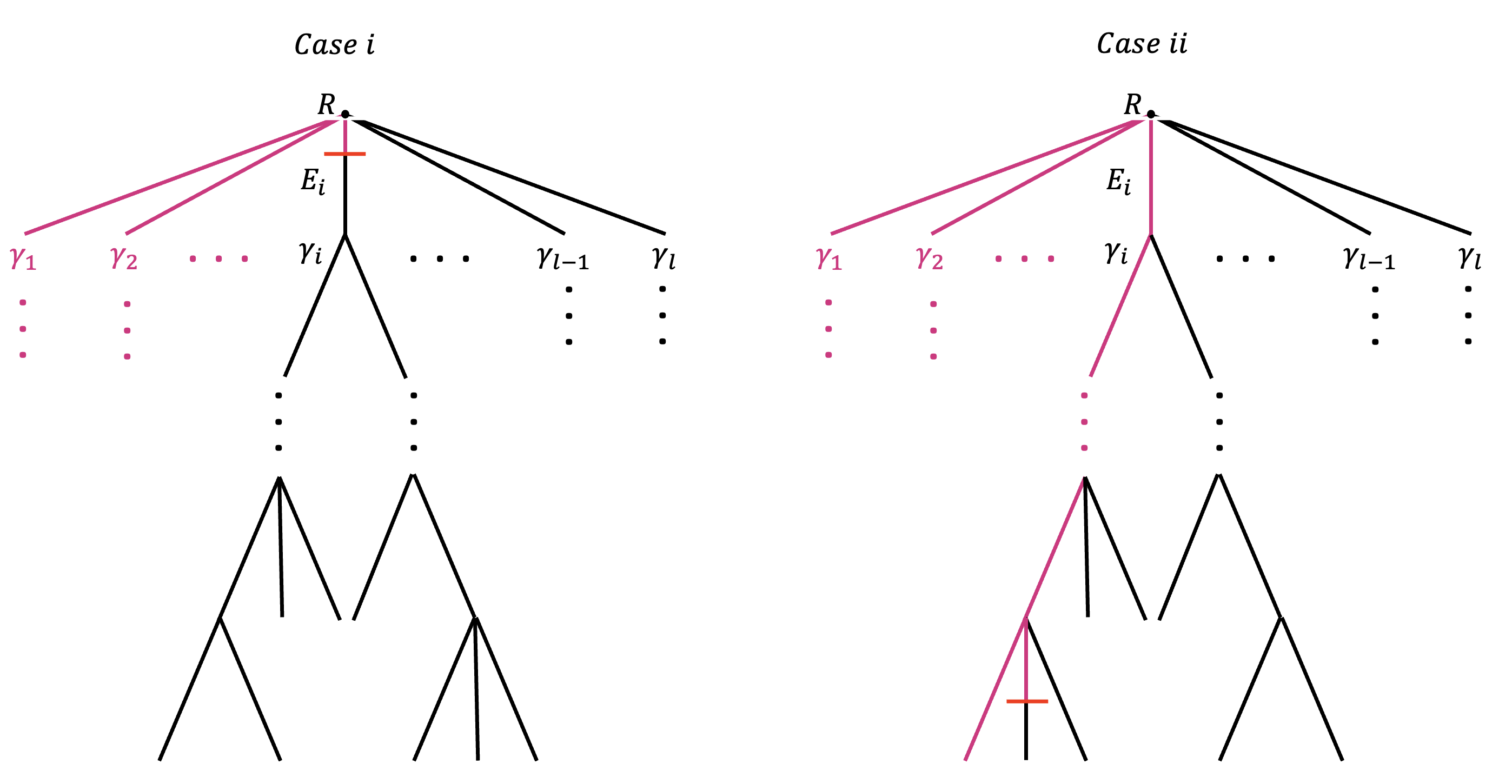}
\caption{Cases (i) and  (ii) of the possible locations of a pause in \texttt{TT}($\Gamma, R$), as indicated by the red lines. The traced pieces are in pink and the untraced pieces are in black.}
\label{example_2proof}
\end{centering}
\end{figure}

\noindent\underline{Case (i):} Recall that the tree tracing algorithm traces each rooted edge before tracing the corresponding whole subtree and moving on to the next rooted edge.
Therefore, the untraced section consists of at most two connected pieces (and in particular, at most $h+1$ connected pieces, as we wanted to prove). The first is the untraced part of $E_i$ along with $\gamma_i$. If $i<l$ then the untraced section contains a second connected piece: $\bigcup_{k=i+1}^{l}(\gamma_{k}\cup E_k)$, which is connected through $R$. The traced section of the tree consists of a single connected piece that contains $R$.

\noindent\underline{Case (ii):} Here we apply the induction hypothesis to the subtree $\gamma_i$, which has height at most $h-1$: by the induction hypothesis, we know that the traced part of $\gamma_i$ has one connected piece and its untraced part has at most $h$ connected pieces. Now, the untraced part of $\Gamma$ contains the untraced portion of $\gamma_i$. If $i<l$, it also contains $\bigcup_{k=i+1}^{l}(\gamma_{k}\cup E_k)$ (which is connected through $R$). Therefore, the untraced portion of $\Gamma$ contains at most $h+1$ connected pieces. On the other hand, the traced part of $\Gamma$ consists of $\bigcup_{k=1}^{i-1}(\gamma_k\cup E_k)$ (which is connected through $R$) together with the traced portion of $\gamma_i$ connected to the previous piece through the traced $E_i$ (which contains both $R$ and the root of $\gamma_i$).

This concludes the inductive step, and hence the proof of the Lemma.
\end{proof}

\begin{remark}
We can apply the cut-and-choose method to the tree tracing described above to obtain a proportional division of a tree cake between two agents that have an entitlement vector of $(\nicefrac12,\nicefrac12)$ in the following way: let agent $A$ move a knife along the tree cake $\Gamma$ according to $\mathtt{TT}(\Gamma,R)$ and cut it whenever they think that both the untraced and traced portions are worth $\nicefrac{1}{2}$ of the cake. Then, agent $B$ chooses which half they want, thus yielding a proportional allocation. By Lemma \ref{one_cut}, one agent receives a single connected piece, and the other receives at most $h+1$ connected pieces. This is the same number of connected pieces that appear in the proportional division in Theorem 4.22 of \cite{BS:divgraphcake2019}. That theorem is a corollary of Lemma 4.21 in the same paper, which is very similar to our Lemma \ref{one_cut}, just with a different choice of tracing of the tree.
\end{remark}

Next, we use Lemma \ref{one_cut} to obtain a weighted consensus division.

\begin{theorem}\label{tree}
Let two agents be owed arbitrary entitlements of a tree graph of minimal height $h$. There exists a weighted consensus division of the cake such that each agent receives at most $h+1$ connected pieces.
\end{theorem}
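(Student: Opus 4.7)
The plan is to use the tree tracing $g=\texttt{TT}(\Gamma,R)$ (with $R$ chosen to realize the minimal height $h$) to reduce the problem to a circular cake. By Remark~\ref{nonatomic}, the valuations on $\Gamma$ induce valuations on $[0,1]/(0\sim 1)$, and Theorem~\ref{thm2} then yields a weighted consensus division of this circle into two connected arcs with cut points $0\leq c_1\leq c_2\leq 1$. By Remark~\ref{inducedivision}, this lifts to a weighted consensus division of $\Gamma$ in which one agent receives $g([c_1,c_2])$ and the other receives $g([0,c_1])\cup g([c_2,1])$. It remains to bound the number of connected components of each piece by $h+1$.

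For the ``outer'' piece $g([0,c_1])\cup g([c_2,1])$, I would use Lemma~\ref{one_cut}: $g([0,c_1])$ is a single connected piece containing $R$, and $g([c_2,1])$ has at most $h+1$ components. Inspection of the proof of Lemma~\ref{one_cut} shows that at most one of these untraced components contains $R$---namely $\bigcup_{k=i+1}^{l}(\gamma_k\cup E_k)$ that appears when $c_2$ falls within the tracing interval of some $E_i$ or $\gamma_i$ with $i<l$. If this component exists, it merges with $g([0,c_1])$ through $R$. Otherwise $c_2$ lies in the tracing interval of $E_l$ or $\gamma_l$, and a brief case check (direct when $c_2\in E_l$, via Lemma~\ref{one_cut} applied to $\gamma_l$ when $c_2\in\gamma_l$) shows that $g([c_2,1])$ has at most $h$ components. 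Either way, the total is at most $h+1$.

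For the ``inner'' piece $g([c_1,c_2])$, I would establish an auxiliary claim by strong induction on height: for any tree $\Gamma'$ of minimal height $h'$ with tree tracing $g'$ and any $0\leq a\leq b\leq 1$, the set $g'([a,b])$ has at most $h'+1$ connected components. The inductive step proceeds by case analysis on where $a$ and $b$ fall in the top-level DFS sequence $E_1,\gamma_1,\ldots,E_l,\gamma_l$. If both fall in the same $\gamma_k$, the claim follows from the induction hypothesis applied to $\gamma_k$ (of height $\leq h'-1$). If they fall in different top-level segments with indices $k_1<k_2$, the piece decomposes into (i) a ``finish'' portion contained in $\gamma_{k_1}$ (or $E_{k_1}\cup\gamma_{k_1}$ if $a\in E_{k_1}$), bounded by $h'$ components via Lemma~\ref{one_cut} applied to $\gamma_{k_1}$, together with (ii) the intermediate complete subtrees plus the ``start'' under $E_{k_2}$, forming one further component connected through $R'$. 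Crucially, these two parts are disjoint because the connecting edge $E_{k_1}$ was traced before time $a$ and hence is absent from $g'([a,b])$, severing any potential connection between $\gamma_{k_1}$'s remainder and $R'$. Applying this claim with $h'=h$, $a=c_1$, $b=c_2$ yields the desired bound on the inner piece.

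The main obstacle will be the vertex bookkeeping in the case analysis for the inner piece: one must verify that the ``finish'' and root-connected pieces are genuinely disjoint (relying on $E_{k_1}$ being outside $g'([a,b])$), and separately treat the boundary subcases where $a$ or $b$ lies on a rooted edge rather than inside a subtree. The vertex-assignment convention of Remark~\ref{inducedivision} keeps the division well-defined at the boundary points $g(c_1)$ and $g(c_2)$.
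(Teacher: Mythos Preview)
Your proposal is correct and rests on the same ingredients as the paper: trace $\Gamma$ via $\texttt{TT}$, apply Theorem~\ref{thm2} to the induced circle, then bound the number of components using Lemma~\ref{one_cut} and induction on height. The paper organizes this as a single induction on $h$ that treats both agents' pieces together, splitting into the cases ``both cuts lie in the same $\gamma_i\cup E_i$'' versus ``cuts lie in different subtrees''. You instead treat the two pieces separately: the outer piece $g([0,c_1])\cup g([c_2,1])$ is handled without induction, just Lemma~\ref{one_cut} applied at $c_1$ and at $c_2$ together with the observation that the unique $R$-containing component of the untraced part (if any) merges with $g([0,c_1])$; the inner piece $g([c_1,c_2])$ is handled by your auxiliary inductive claim. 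This is a legitimate alternative decomposition---your outer argument is arguably slicker, at the cost of having to inspect the proof of Lemma~\ref{one_cut} to see that at most one untraced component contains $R$. One small correction: your auxiliary claim should be stated for rooted trees of \emph{height} $h'$, not minimal height, since the subtrees $\gamma_k$ inherit a prescribed root and need not realize their minimal height there; your inductive step already reads ``height $\leq h'-1$'', so only the wording of the claim needs adjusting.
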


\begin{proof} 
Let $\Gamma$ be a tree with root $R$ and minimal height $h$, and consider the tracing $\texttt{TT}(\Gamma,R)$. As noted in Remark \ref{nonatomic}, the agents' valuation functions on $\Gamma$ induce, via this tracing, valuation functions on a circular cake. 
Theorem \ref{thm2} cuts a circular cake at two points to produce a weighted consensus division where each agent's piece is connected. By Remark \ref{inducedivision}, this then induces two cuts on $\Gamma$ and a corresponding weighted consensus division.
We claim that this division of $\Gamma$ results in at most $h+1$ connected pieces for each agent. We will prove this by complete induction on $h$.

 \textbf{Base case:} The tracing \texttt{TT} applied to a tree of height 1, i.e. a star graph, is exactly the tracing described in the proof of Theorem \ref{bigstar}. The rest of that proof shows that, in this case, each agent receives at most two connected pieces, as desired.

 \textbf{Induction hypothesis:} Assume that for any tree of height $n<h$, the consensus division obtained as described in the first paragraph of this proof assigns at most $n+1$ connected pieces to each agent.

 \textbf{Inductive step:} Let $\Gamma$ be a tree of height $h\geq 2$ with root $R$. The process described in the first paragraph of this proof yields two cuts on $\Gamma$ which determine its consensus division. There are two cases of where these cuts could lie on $\Gamma$, relative to each other: either 
(i) the two cuts are on the same subtree, $\gamma_i\cup E_i$ or 
(ii) the two cuts are on different subtrees, $\gamma_i \cup E_i$ and $\gamma_j \cup E_j$.

\underline{Case (i):}
If both of the cuts are on $E_i$ one agent gets a single connected piece (the segment of $E_i$ between the cuts), and the other gets two (one piece consists of $\gamma_i$ together with the segment of $E_i$ adjacent to it, the other consists of all other subtrees and respective rooted edges, together with the segment of $E_i$ connected to the root).

If the first cut is on $E_i$ and the second on $\gamma_i$ (which is a tree of height at most $h-1$), then one agent
gets a single connected piece (the rest of $E_i$ from the cut to the root of $\gamma_i$ as well as the part of $\gamma_i$ traced until the second cut, connected through the root of $\gamma_i$). The other agent gets the part of  $\gamma_i$ still untraced when the second cut is made (consisting of at most $h$ connected pieces by Lemma \ref{one_cut}) as well as the connected piece $\bigcup_{k\neq i}(\gamma_k\cup E_k)$ with the segment of $E_i$ between $R$ and the first cut -- this comes to $h+1$ connected pieces. Therefore, each agent receives at most $h+1$ connected pieces.

If both cuts are on $\gamma_i$ (which is a tree of height at most $h-1$), then by the induction hypothesis, $\gamma_i$ can be assigned to the agents such that each agent receives at most $h$ connected pieces. All of the other subtrees, along with $E_i$, are assigned to the same agent. As all the subtrees are connected through $R$, the agent assigned $E_i$ receives at most $h+1$ connected pieces, and the other agent receives at most $h$ connected pieces. 

\underline{Case (ii):} 
In this case, the end of the rooted branches $E_i$ and $E_j$ connected to $R$ are assigned to different agents, let us say $A$ and $B$ respectively.  Let us consider $\gamma_i \cup E_i$. If the cut on this part is on $E_i$, then both $A$ and $B$ receive one connected piece of $\gamma_i \cup E_i$. If the cut is on $\gamma_i$, then by Lemma \ref{one_cut}, $\gamma_i$ is divided such that agent $A$ gets one connected piece, and agent $B$ gets at most $h$ connected pieces of $\gamma_i\cup E_i$. 

For $\gamma_j\cup E_j$, this is the opposite: agent $B$ gets one connected piece and agent $A$ gets at most $h$ connected pieces of $\gamma_j \cup E_j$. As all the $\gamma_k$'s are connected through $R$, and each of the other subtrees is assigned wholly to one agent, overall, each agent gets at most $h+1$ connected pieces.
\end{proof}

\begin{remark}
Example \ref{example} shows that Theorem \ref{tree} is sharp for trees of height $h=1$. Whether it is sharp for trees of other heights remains an open question: is there a tree of height $h>1$ such that any weighted consensus division requires at least $2h+2$ connected pieces to be split among the two agents?
\end{remark}

\section{Connected graphical cakes}\label{graph2}

In this section, we tackle the case of a connected graphical cake $G$. Recall that a \textbf{spanning tree}  of a connected graph $G$ is a subgraph of $G$ that is a tree (a connected graph without cycles) and includes all of the vertices of $G$. It can be obtained by inductively deleting edges of $G$ that form cycles in $G$. We define a related notion, as follows:

\begin{definition}
Given a connected graph $G$, an  \textbf{edge-spanning tree} of $G$ is any tree obtained by inductively detaching edges that form cycles in $G$ from one of their endpoints and adding a vertex to serve as a new endpoint. 
\end{definition}

The result is a tree that contains as many edges as $G$ had. Equivalently, an edge-spanning tree of $G$ can be constructed (not uniquely) from a given spanning tree of $G$ by augmenting it: for each edge $e$ of $G$ that was removed when constructing the spanning tree, add it back as a new leaf edge incident to either endpoint of $e$. Doing this for all removed edges increases the height of the original spanning tree by at most one, because new leaf edges are only appended onto vertices of the spanning tree (and not to the new endpoint of another new leaf edge).

The radius of a graph is the minimum, over all vertices $v$, of the maximum distance from $v$ to any other vertex of the graph. Therefore, if a graph $G$ has radius $r$, then a spanning tree of $G$ of minimal height has height at most $r$, and an edge-spanning tree of $G$ of minimal height has height at most $r+1$.

\begin{theorem}
Let two agents be owed arbitrary entitlements of a connected graphical cake of radius $r$. There exists a weighted consensus division of the cake such that each agent receives at most $r+2$ connected pieces.
\label{connected_graph}
\end{theorem}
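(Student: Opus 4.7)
The plan is to reduce the connected graph case to the tree case of Theorem \ref{tree} by passing to an edge-spanning tree. The excerpt already observes that an edge-spanning tree of $G$ of minimal height has height at most $r+1$, so applying Theorem \ref{tree} to such a tree yields a weighted consensus division with at most $(r+1)+1=r+2$ connected pieces per agent on the tree. The main task is then to transport this division back to $G$ and check that (a) the consensus property is preserved and (b) the bound on the number of connected pieces survives the projection.

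Concretely, I would start by choosing a central vertex $v_0$ of $G$, so that a spanning tree rooted at $v_0$ has height at most $r$. Then I would augment this spanning tree to an edge-spanning tree $T$: for each cycle-edge $e=(u,v)$ removed while constructing the spanning tree, I reattach it as a leaf edge $(u,v')$ in $T$ by detaching the endpoint $v$ and introducing a new leaf vertex $v'$. The argument quoted from the excerpt shows that $T$ has minimal height at most $r+1$. Since $T$ consists of exactly the same edges as $G$ (viewed as intervals), the valuation functions $v_A$ and $v_B$ on $G$ restrict to non-atomic probability measures on the interiors of edges, and thus define valuation functions on $T$ by assigning measure zero to vertices (the new leaves $v'$, which are not in $G$, simply inherit measure zero). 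Applying Theorem \ref{tree} to $T$ with these induced valuations produces a weighted consensus division $(\alpha_T,\beta_T)$ with $v_A(\alpha_T)=v_B(\alpha_T)=t$, $v_A(\beta_T)=v_B(\beta_T)=1-t$, and at most $r+2$ connected pieces per agent.

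Finally, I would project this division back to $G$ by identifying each new leaf $v'$ with its corresponding original vertex $v$. Because vertices are finite and the valuations are non-atomic, by \eqref{eq:points} all measures are preserved and the division $(\alpha_G,\beta_G)$ of $G$ obtained in this way is still weighted consensus. The key observation for connectivity is that identifying $v'$ with $v$ can only merge two pieces of the same agent into one (in the case where that agent owns terminal segments of edges incident to both $v$ and $v'$ in $T$) and can never split a connected piece; following the convention from Remark \ref{inducedivision} that a vertex is assigned to every agent owning an incident terminal segment, no further splitting arises. Hence the number of connected pieces that each agent receives in $G$ is at most the number they receive in $T$, which is at most $r+2$.

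The main obstacle I anticipate is bookkeeping around the projection step: one has to be careful that identifying new leaf vertices with original vertices really does preserve the consensus measure (handled by non-atomicity) and is consistent with the connectivity convention for vertices at the boundary of the two agents' pieces. Once those two points are verified, the result follows immediately from the height bound for edge-spanning trees combined with Theorem \ref{tree}.
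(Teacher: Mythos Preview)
Your proposal is correct and follows essentially the same approach as the paper: pass to a minimal-height edge-spanning tree of height at most $r+1$, apply Theorem~\ref{tree} to get at most $r+2$ connected pieces per agent, and then observe that re-identifying the detached vertices can only merge pieces (never split them), so both the consensus property and the connectivity bound survive the projection back to $G$. The paper's proof is terser about the construction of the tree and the transfer of valuations, but the structure and the key observation about identifications not increasing component counts are identical.
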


\begin{proof}
Let $G$ be a connected graph and $\Gamma$ be a minimal height edge-spanning tree of $G$; its height is at most $r+1$. The agents' valuation functions on $G$ induce valuation functions on $\Gamma$.
Theorem \ref{tree} guarantees that there exists a weighted consensus division of $\Gamma$ such that each agent receives at most $r+2$ connected pieces. 
We obtain $G$ again by identifying some of the vertices of $\Gamma$ -- those that were detached to break the cycles of $G$ in the construction of $\Gamma$ -- and giving each agent the pieces of $G$ that correspond to the pieces of $\Gamma$ they were assigned. 

 Whenever we identify two points assigned to the same agent, if those two points are in the same connected piece of $\Gamma$ that is being assigned to that agent, then the number of pieces assigned to the agent remains unchanged. If those two points are in different connected pieces of $\Gamma$ being assigned to that agent, then doing this decreases the number of connected pieces by one. When we identify points assigned to different agents, this does not change the number of connected pieces that each agent receives. Hence, this process cannot increase the number of connected pieces that each agent receives, and the result is proven.
\end{proof}

\begin{remark}
Following the tree tracing $\mathtt{TT}$ of an edge-spanning tree of a (connected) graph $G$ as implied in the proof of Theorem \ref{connected_graph} induces a tracing of $G$. Another tracing of a graph is presented in \cite[Definition 4.3]{BS:divgraphcake2019}. The authors apply cut-and-choose to that tracing, for example, to construct a proportional allocation between two agents with an entitlement vector of $(\nicefrac12,\nicefrac12)$ such that each agent receives one connected piece, in the case when the graph $G$ is almost-bridgeless; see  \cite[Theorem 4.5]{BS:divgraphcake2019}.
\end{remark}

\section{General graphical cakes}\label{graph3}

Finally, we look at the case of general graphical cakes, i.e., we drop the hypothesis of connectedness in Theorem \ref{connected_graph}. Here, a graph $\mathcal{G}$ is a disjoint union of $s$ connected components $G_k$, each with radius $r_k$:
$$\mathcal{G}=\bigcup_{k=1}^{s}G_k.$$
We denote by $r=\max_{k}r_k$ the maximal radius over all components.

An \textbf{edge-spanning forest} $\Psi$ of $\mathcal{G}$ is the disjoint union of edge-spanning trees $\Gamma_k$ of each of the connected components $G_k$:
$$\Psi=\bigcup_{k=1}^{s}\Gamma_k.$$
We say that an edge-spanning forest has minimal height when each edge-spanning tree is itself of minimal height.

\begin{theorem}
Let two agents be owed arbitrary entitlements of a graphical cake composed of $s$ connected components with maximal radius $r$. There exists a weighted consensus division of the cake involving at most $s+2r+4$ connected pieces. More precisely,
\begin{enumerate}[(i)]
\item either $s-1$ of the $s$ connected components are each assigned wholly to one of the two agents, and the remaining connected component is divided such that each agent receives at most $r+2$ connected pieces from that component,
\item or $s-2$ of the $s$ connected components are each assigned wholly to one of the two agents, and the remaining two connected components are divided such that each agent receives at most $r+3$ connected pieces coming from those components.
\end{enumerate}
\label{disconnected_graph}
\end{theorem}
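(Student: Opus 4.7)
The plan is to generalise the strategy of Theorem \ref{connected_graph} using an edge-spanning forest $\Psi=\bigcup_{k=1}^s\Gamma_k$ of $\mathcal{G}$ of minimal height. Each $\Gamma_k$ is a minimal-height edge-spanning tree of $G_k$, rooted at a vertex $R_k$ realising its height $h_k\leq r_k+1\leq r+1$. I would concatenate the tree tracings $\mathtt{TT}(\Gamma_k,R_k)$ for $k=1,\ldots,s$ (in some fixed order) into a single tracing $g:[0,1]\to\Psi$; the jumps between consecutive trees are absorbed into the finite set $g^{-1}(V)$, so the tracing definition is satisfied. The agents' valuations then descend (Remark \ref{nonatomic}) to the circular cake $[0,1]/(0\sim 1)$, and Theorem \ref{thm2} yields a weighted consensus division of that circle by two cuts $c_1$ and $c_2$, which lift to two cuts on $\Psi$.

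The heart of the proof is a case analysis on where these two cuts lie relative to the arcs of the individual trees on the circle. In case (i), both cuts fall inside a single tree $\Gamma_i$: then all other $s-1$ trees sit entirely in the outer arc and are wholly assigned to one agent (contributing one connected piece each), while $\Gamma_i$ is cut twice and the analysis inside case (i) of the proof of Theorem \ref{tree} gives at most $h_i+1\leq r+2$ connected pieces of $\Gamma_i$ to each agent. This matches conclusion (i) of the statement and produces at most $2(r+2)+(s-1)=s+2r+3$ pieces overall, well within the claimed bound.

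In case (ii), the two cuts lie in different trees $\Gamma_i$ and $\Gamma_j$ (say $i<j$ in the tracing order). The $s-2$ other trees each lie entirely in one of the two arcs, so each is wholly assigned to one agent. To each of $\Gamma_i$ and $\Gamma_j$ I would apply Lemma \ref{one_cut}: a single cut splits the tree into one connected traced piece containing the root and at most $h+1\leq r+2$ untraced pieces. Tracking which piece belongs to which arc, each agent ends up with the traced piece of one of $\{\Gamma_i,\Gamma_j\}$ (one connected piece) together with the untraced portion of the other (at most $r+2$ pieces), for at most $r+3$ connected pieces per agent from those two components. This matches conclusion (ii), and the total piece count is bounded by $2(r+3)+(s-2)=s+2r+4$, which is the claimed bound.

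To finish, I would transfer the division from $\Psi$ back to $\mathcal{G}$ by reversing the vertex detachments used in the construction of each $\Gamma_k$. As in the proof of Theorem \ref{connected_graph}, each vertex identification either leaves the piece count unchanged or decreases it by one, so the bounds established on $\Psi$ persist on $\mathcal{G}$. The main subtlety will be the bookkeeping in case (ii): verifying that the traced-versus-untraced partition of each cut tree aligns correctly with the two arcs of the circle, and that the $s-2$ uncut trees between them genuinely contribute a single connected piece each to whichever agent owns the arc containing them.
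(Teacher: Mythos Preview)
Your proposal is correct and follows essentially the same route as the paper: build a minimal-height edge-spanning forest, concatenate the tree tracings $\mathtt{TT}(\Gamma_k,R_k)$ into a single circular tracing, apply Theorem~\ref{thm2} to get two cuts, then split into the same two cases (both cuts in one tree versus cuts in different trees), invoking Theorem~\ref{tree} in case~(i) and Lemma~\ref{one_cut} in case~(ii), and finally re-identify vertices as in Theorem~\ref{connected_graph}. Your bookkeeping in case~(ii)---one traced piece from one cut tree and at most $r+2$ untraced pieces from the other, totalling $r+3$ per agent---is exactly what is needed.
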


\begin{proof}
    Let $\mathcal{G}=\bigcup_{k=1}^{s}G_k$ be a graph with $s$ connected components, with maximal radius $r$. Let $\Psi=\bigcup_{k=1}^{s}\Gamma_k.$ be a minimal height edge-spanning forest of $G$, where each $\Gamma_k$ is an edge-spanning tree of the connected component $G_k$ and has root $R_k$.
    The height of each $\Gamma_k$ is at most $r+1$. The agents' valuation functions on $G$ induce valuation functions on $\Psi$.

    Trace the forest $\Psi$ by concatenating $\texttt{TT}(\Gamma_k,R_k)$ for $k=1, 2, \ldots, s$. 
    As noted in Remark \ref{nonatomic}, the agents' valuation functions on $\Psi$ induce, via this tracing, valuation functions on a circular cake. Theorem \ref{thm2} cuts a circular cake at two points, $c$ and $\kappa(c)$, to produce a weighted consensus division where each agent's piece is connected: agent $A$ receives the piece between $c$ and $\kappa(c)$ and agent $B$ receives the piece between $\kappa(c)$ and $c$. By Remark \ref{inducedivision}, this then induces two cuts on $\Psi$ and a corresponding weighted consensus division of $\Psi$ and therefore of $\mathcal{G}$.

    There are two cases of where these cuts could lie on $\mathcal{G}$ (and respectively on $\Psi$), relative to each other: either (i) the two cuts are on the same connected component, $G_i$ (respectively $\Gamma_i$), or (ii) the two cuts $c$ and $\kappa(c)$ are on different connected components, $G_i$ and $G_j$ respectively (and hence $\Gamma_i$ and $\Gamma_j$). 

    \underline{Case (i):}
    There are $s-1$ connected components $G_k$ with no cut on them; therefore, each of these is assigned to exactly one agent. In this case, the two cuts happened during $\texttt{TT}(\Gamma_i,R_i)$, so we are in the conditions of the proof of Theorem \ref{tree}. Since the tree $\Gamma_i$ has height at most $r+1$, this results in at most $r+2$ connected pieces of $\Gamma_i$ (and hence at most $r+2$ connected pieces of $G_i$) allocated to each agent.
    In this case, there are at most a total of $s+2r+3$ connected pieces to distribute among the two agents.
    
    \underline{Case (ii):}
    There are $s-2$ connected components $G_k$ with no cut on them; therefore, each of these is assigned to exactly one agent.     
    Cut $c$ happens during $\texttt{TT}(\Gamma_i, R_i)$ and cut $\kappa(c)$ happens during $\texttt{TT}(\Gamma_j, R_j)$, so we are in the conditions of Lemma \ref{one_cut}, with each tree having height at most $r+1$. 
    For $\Gamma_i$, this results in an allocation of the at most $r+1$ traced connected pieces of $\Gamma_i$ (and therefore of $G_i$) to agent $A$ and the at most 2 untraced connected pieces of $\Gamma_i$ (and therefore of $G_i$) to agent $B$. 
    For $\Gamma_j$, the at most $r+1$ traced connected pieces of $\Gamma_j$ (and therefore of $G_j$) are allocated to agent $B$ while the at most 2 untraced connected pieces of $\Gamma_j$ (and therefore of $G_j$) are allocated to agent $A$.
    Overall in this case, there are at most a total of $s+2r+4$ connected pieces to distribute among the two agents.
    \end{proof}
\
\section*{Acknowledgements} This work started as part of a final year undergraduate project at the University of Edinburgh in 2022-2023, along with Jiawen Chen, Alyssa Heggison and Aidan Horner; we thank them for their contribution, and especially thank Alyssa for her significant contribution to the work on star cakes. We also thank Nick Sheridan for improving the proof (and statement) of Theorem \ref{thm2},  Warut Suksompong for helpful suggestions, and the anonymous referees for many recommendations that vastly improved an earlier version of this paper.

Ana Rita Pires was partially supported by an Emmy Noether Fellowship from the London Mathematical Society [REF EN-2122-01].

\end{document}